\newtheorem{lem}{Lemma}[section]
\newtheorem{thm}[lem]{Theorem}
\newtheorem{pro}[lem]{Proposition}
\newtheorem{cor}[lem]{Corollary}
\newtheorem{con}[lem]{Conjecture}
\newcommand{\ZZ}{{\mathbb{Z}}}
\newcommand{\M}{{\mathcal{M}}}
\newcommand{\U}{{\textsf{U}}}
\newcommand{\D}{{\textsf{D}}}
\renewcommand{\L}{{\textsf{L}}}
\begin{document}
\title{Skew-standard tableaux with three rows}

\author{Sen-Peng Eu}

\address{Department of Applied Mathematics, National University of Kaohsiung, Kaohsiung
811, Taiwan, ROC}
\email{speu@nuk.edu.tw}
\subjclass{05A15, 05A19}
\keywords{standard Young tableaux, Motzkin numbers, Motzkin paths}
\thanks{Partially supported by National Science Council, Taiwan under grant NSC
98-2115-M-390-002-MY3}

\maketitle

\begin{abstract}
Let $\mathcal{T}_3$ be the three-rowed strip. Recently Regev
conjectured that the number of standard Young tableaux with $n-3$
entries in the ``skew three-rowed strip'' $\mathcal{T}_3 / (2,1,0)$
is $m_{n-1}-m_{n-3}$, a difference of two Motzkin numbers. This
conjecture, together with hundreds of similar identities, were
derived automatically and proved rigorously by Zeilberger via his
powerful program and WZ method. It appears that each one is a linear
combination of Motzkin numbers with constant coefficients. In this
paper we will introduce a simple bijection between Motzkin paths and
standard Young tableaux with at most three rows. With this bijection
we answer Zeilberger's question affirmatively that there is a
uniform way to construct bijective proofs for all of those
identities.
\end{abstract}

\section{Introduction}
The enumeration of standard Young tableaux (SYTs) is a fundamental
problem in combinatorics and representation theory. For example, it
is known that the number of SYTs of a given shape $\lambda \vdash n$
is counted by the hook-length formula~\cite{Frame_54}. However, the
problem of counting SYTs of bounded height is a hard one. Let
$\mathcal{T}_k(n):=\{\lambda=(\lambda_1,\lambda_2,\dots,
\lambda_k)\vdash n : \lambda_1\ge \dots \lambda_k \ge 0\}$ be the
set of SYTs with $n$ entries and at most $k$ rows, and let
$\mathcal{T}_k=\bigcup_{n=1}^{\infty}\mathcal{T}_k(n)$ be the
$k$-rowed strip. In 1981, Regev proved that
$$|\mathcal{T}_2(n)|={n\choose \lfloor \frac{n}{2}\rfloor} \qquad \mbox{and} \qquad |\mathcal{T}_3(n)|=\sum_{i\ge 0}\frac{1}{i+1}{n\choose 2i}{2i\choose i}$$
in terms of symmetric functions~\cite{Regev_81}. Note that
$|\mathcal{T}_3(n)|$ is exactly the Motzkin number $m_n$. In 1989,
together with $|\mathcal{T}_2(n)|$ and $|\mathcal{T}_3(n)|$,
Gouyou-Beauchamps derived that
$$|\mathcal{T}_4(n)|=c_{\lfloor \frac{n+1}{2}\rfloor}c_{\lceil
\frac{n+1}{2}\rceil} \qquad \mbox{and} \qquad
|\mathcal{T}_5(n)|=6\sum_{i=0}^{\lfloor \frac{n}{2}\rfloor}{n\choose
2i}c_i\frac{(2i+2)!}{(i+2)!(i+3)!}$$ combinatorially, where
$c_n=\frac{1}{n+1}{2n\choose n}$ is the Catalan
number~\cite{Gouyou-Beauchamps_89}. His idea relied on the fact that
the number of SYTs with $n$ entries and at most $k$ rows equals the
number of involutions of $[n]$ with the length of a longest
decreasing subsequence at most $k$, hence it suffices to count these
restricted involutions. These are in fact all the simple formulae we
have for $|\mathcal{T}_k(n)|$ so far ~\cite{Stanley_99}. Meanwhile,
Zeilberger proved that for each $k$ the generating function of
$|\mathcal{T}_k(n)|$ is always $P$-recursive~\cite{Zeilberger_90}.
Gessel also pointed out this fact and derived explicitly the
exponential generating function of $|\mathcal{T}_k(n)|$ in terms of
hyperbolic Bessel functions of the first kind~\cite{Gessel_90,
Stanley_07}.

Recently Regev considered the following variation among others.
Given $\mu=(\mu_1,\mu_2, \mu_3)$ a partition of at most three parts,
let $|\mu|:=\mu_1+\mu_2+\mu_3$ and $\mathcal{T}_3(\mu; n-|\mu|)$ be
the set of SYTs with $n-|\mu|$ entries in the ``skew strip''
$\mathcal{T}_3 / \mu$. Regev conjectured that for $\mu=(2,1,0)$,
$$|\mathcal{T}_3((2,1,0); n-3)|=m_{n-1}-m_{n-3},$$
a difference of two Motzkin numbers~\cite{Regev_09}. This conjecture
is confirmed by Zeilberger by using the WZ method~\cite{EKHAD_06}.
What's more, with his powerful Maple package {\tt AMITAI},
Zeilberger could generate and rigorously prove many similar
identities, among them are a list of formulae of
$|\mathcal{T}_3(\mu;n-|\mu|)|$ for $\mu_1\le 20$, and the number of
SYTs in $\mathcal{T}_3$ with the restriction that the $(i,j)$ entry
is $m$ for $1\le m\le 15$. Amazingly, each formula is a linear
combination of negative shifts of the Motzkin numbers with constant
coefficients.

In the remark of~\cite{EKHAD_06} Zeilberger then asked that, besides
Regev's question of finding a combinatorial proof of the
$m_{n-1}-m_{n-3}$ conjecture (now a theorem, after Zeilberger), is
there a uniform way to construct combinatorial proofs to all of
these results, or prove that there is no natural bijection because
the identities are true `just because'.

In this paper we answer Regev and Zeilberger's questions
affirmatively. We shall present a simple bijection between
$\mathcal{T}_3(n)$ and the set of Motzkin paths of length $n$, which
gives another proof for $|\mathcal{T}_3(n)|=m_n$. With this
bijection we can prove Regev's conjecture and consequently all of
Zeilberger's identities for three-rowed SYTs combinatorially.

\medskip
The paper is organized as follows. We introduce the bijection in
Section 2. In Section 3 we give combinatorial proofs to Regev's and
Zeilberger's results. In the last section we give a conjecture,
regarding a relation between $|\mathcal{T}_{2\ell+1}(n)|$ and
$|\mathcal{T}_{2\ell}(n)|$.


\section{Motzkin paths and the three-rowed SYTs}
Let $m_n$ denote the $n$th Motzkin number. One way to define the
Motzkin numbers is by their generating function $M=\sum_{n\ge 0}m_n
x^n=\frac{1-x-\sqrt{1-2x-3x^2}}{2x^2}$. This function satisfies the
equation
\begin{equation}~\label{eq1}
M=1+xM+x^2M^2.
\end{equation}
One combinatorial interpretation of the Motzkin numbers is the Motzkin paths. A {\em Motzkin path} of length $n$ is a lattice path from $(0,0)$ to $(n,0)$ using {\em up steps} $(1,1)$, {\em down steps} $(1,-1)$, and {\em level steps} $(1,0)$ that never go below the $x$-axis.
Let $\U$, $\D$, and $\L$ denote an up step, a down step, and a level step, respectively.

Given a standard Young tableau $T$ with $n$ entries, we associate
$T$ with a word $\chi(T)$ of length $n$ on the alphabet
$\ZZ^{+}=\{1, 2, 3,\dots\}$, where $\chi(T)$ is obtained from $T$ by
letting the $j$th letter be the row index of the entry of $T$
containing the number $j$. The words $\chi(T)$ are known as {\em
Yamanouchi words}. For example,
\begin{equation*}
T=\young(136,257,48)
\quad \longleftrightarrow \quad \chi(T)=12132123.
\end{equation*}
On the other hand, given a Yamanouchi word $\omega$, it is
straightforward to recover the corresponding tableau
$\chi^{-1}(\omega)$, i.e., the $i$th row of which contains the
indices of the letters of $\omega$ that are equal to $i$.

\medskip
Now we present a bijection $\phi$ between Motzkin paths and the
tableaux of $\mathcal{T}_3(n)$ in terms of Yamanouchi words.

Let $\M_n$ denote the set of Motzkin paths of length $n$. Let
$\mathcal{W}_3(n) =\{\chi(T): T\in\mathcal{T}_3(n)\}$. Note that
$\mathcal{W}_3(n)$ is the set of Yamanouchi words of length $n$ on
the alphabet $\{1, 2, 3\}$. Given a $\pi\in\M_n$, let
$\pi=x_1x_2\cdots x_n$ where $x_i$ is the $i$th step of $\pi$. We
shall associate $\pi$ with a word $\phi(\pi)$ of length $n$ by the
following procedure.

\begin{enumerate}
\item[(A1)] If $\pi$ starts with a level step then we label the first step by $1$.
Otherwise $\pi$ starts with an up step. Let $j$ be the least integer such that $x_j$ is not an up step. There are two cases.
\begin{itemize}
  \item $x_j$ is a down step. Then we label the two steps $x_{j-1}$ and $x_j$ by $1$ and $2$, respectively.
  \item $x_j$ is a level step. Then we find the least integer $k$ such that $k>j$ and $x_k$ is a down step, and label the three steps $x_{j-1}$, $x_j$, and $x_k$ by $1$, $2$, and $3$, respectively.
\end{itemize}

\item[(A2)] Form a new path $\pi'$ from $\pi$ by removing those labeled steps and concatenating the remaining segments of steps. If $\pi'$ is empty then we are done, otherwise go to (A1) and proceed to process $\pi'$.
\end{enumerate}

Reading the labels of $x_1x_2\dots x_n$ in order, we obtain the
requested word $\phi(\pi)$. Note that each step with the label 2 is
preceded by a matching step with the label 1, and whenever a step is
labeled by 3 there is a matching pair of steps with labels 1 and 2.
Hence $\phi(\pi)\in \mathcal{W}_3(n)$.

For example, Figure \ref{fig:Motzkin} shows a Motzkin path $\pi$ and the corresponding Yamanouchi word $\phi(\pi)$, along with the stages of step-labeling process.
\begin{figure}[ht]
\begin{center}
\includegraphics[width=2.8in]{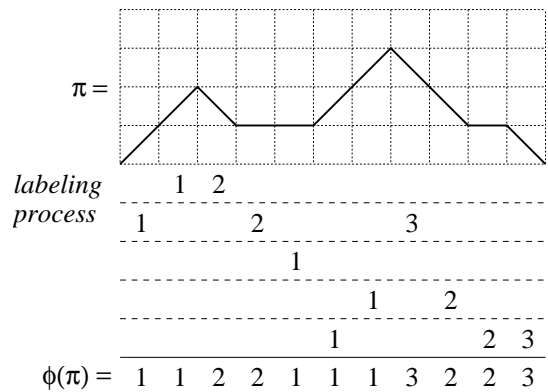}
\end{center}
\caption{\small A Motzkin path and the corresponding word.}
\label{fig:Motzkin}
\end{figure}

\noindent With $\phi(\pi)=11221113223$, we have the associated
standard Young tableau:
\[
\begin{Young}
1  & 2  & 5  & 6  & 7 \cr
3  & 4  & 9  & 10 \cr
8  & 11 \cr
\end{Young}
\]

\medskip
To find $\phi^{-1}$, given a word $\omega\in\mathcal{W}_3(n)$, let
$\omega=z_1z_2\cdots z_n$ where $z_i$ is the $i$th letter of
$\omega$. We shall recover the Motzkin path $\phi^{-1}(\omega)\in
\M_n$ by the following procedure.

\begin{enumerate}
\item[(B1)] If $\omega$ consists of letters 1 only, then each letter is associated with a level step. Otherwise, we distinguish the following two cases.
\begin{itemize}
  \item $\omega$ has no letters 3. Find the first letter 2, say $z_j$, and associate $z_{j-1}$ and $z_j$ with an up step and a down step, respectively.
  \item $\omega$ has letters 3. Suppose $C_1,\dots,C_d$ are the marks of letter $3$ in $\omega$.
       For $k=0,\dots,d-1$ and from right to left, let
       $B_{d-k}$ be the first unmarked letter 2 that
       $C_{d-k}$ encounters. For $i=1,\dots,d$, suppose
       there are $t_i$ unmarked letters 2 on the left of
       $B_i$, let $E_{i,j}$ mark the $j$th 1 from left to
       right. For $j=1,\dots,t_i$, let $D_{i,j}$ be the
       first unmarked letter 1 from right to left that
       $E_{i,j}$ encounters. We associate each pair
       $(D_{i,j},E_{i,j})$ with an up step and a down step,
       respectively. Then let $A_i$ be the first unmarked
       letter 1 on the left of $B_i$, and associate the
       triple $(A_i,B_i,C_i)$ with an up step, a level step,
       and a down step.
\end{itemize}

\item[(B2)] If all of the letters have been associated with steps then we are done, otherwise form a new word $\omega'$ by removing those letters with steps, and then go to (B1) and proceed to process $\omega'$.
\end{enumerate}
Note that at each stage of the process the path never goes below the
$x$-axis and the starting point and end point of the path are always
on the $x$-axis. Hence $\phi^{-1}(\omega)\in\M_n$. It is easy to see
that $\phi$ and $\phi^{-1}$ are indeed inverses to each other.

Now that $|\mathcal{T}_3(n)|=|\mathcal{W}_3(n)|=|\mathcal{M}_n|$, we
prove the following classic result.
\begin{thm}{\rm (Regev \cite{Regev_81})}The number of standard Young tableaux with $n$ entries and at most three rows is the Motzkin number $m_n$.
\end{thm}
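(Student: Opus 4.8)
The plan is to assemble the three bijective facts already established and simply read off the count. First I would recall that the rule $T\mapsto\chi(T)$ is a bijection from $\mathcal{T}_3(n)$ onto $\mathcal{W}_3(n)$: a tableau with at most three rows is completely encoded by its Yamanouchi word on the alphabet $\{1,2,3\}$, and conversely $\chi^{-1}$ reconstructs the tableau row by row, so $|\mathcal{T}_3(n)|=|\mathcal{W}_3(n)|$.

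Next I would invoke the map $\phi$ constructed above as a genuine bijection from $\M_n$ onto $\mathcal{W}_3(n)$. Well-definedness is clear from the labeling rule (A1)--(A2): every step labeled $2$ is matched by an earlier step labeled $1$, and every step labeled $3$ carries a matching pair of steps labeled $1$ and $2$, so the output is always a Yamanouchi word on $\{1,2,3\}$. The inverse is furnished by (B1)--(B2), which reads such a word and rebuilds a lattice path; one checks that this path never dips below the axis and returns to it, hence is a Motzkin path, and that the two procedures undo one another. This gives $|\M_n|=|\mathcal{W}_3(n)|$.

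Finally, since the Motzkin paths of length $n$ are by definition counted by the $n$th Motzkin number $m_n$, chaining the equalities yields $|\mathcal{T}_3(n)|=|\mathcal{W}_3(n)|=|\M_n|=m_n$, which is the assertion.

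The main obstacle is not the counting step, which is a one-line chain, but the verification underlying it: that $\phi$ and $\phi^{-1}$ are truly mutual inverses. The delicate part is the branch of (B1) handling words that contain the letter $3$. There one must confirm that the prescribed pairing of each mark $C_i$ with an unmarked $2$, together with the nested pairing of the $1$'s against the remaining $2$'s, reproduces exactly the steps that the forward procedure (A1) strips off at the corresponding stage, and that the resulting up, level, and down steps keep the path weakly above the axis throughout. Once this stage-by-stage agreement of the two recursive procedures is checked, the theorem follows at once.
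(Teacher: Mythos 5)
Your proposal is correct and follows exactly the paper's route: the theorem is obtained by chaining $|\mathcal{T}_3(n)|=|\mathcal{W}_3(n)|=|\mathcal{M}_n|=m_n$ via the Yamanouchi-word encoding $\chi$ and the bijection $\phi$ of Section 2. You are in fact more candid than the paper about where the real work lies (verifying that (A1)--(A2) and (B1)--(B2) are mutual inverses), which the paper dismisses with ``it is easy to see.''
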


\section{Regev's and Zeilberger's results}
\subsection{A Combinatorial proof to Regev's result}

Recall that $\mathcal{T}_3(\mu; n-|\mu|)$ is the set of SYTs with
$n-|\mu|$ entries in the ``skew strip'' $\mathcal{T}_3 / \mu$, where
$\mu$ is a partition of at most three parts. The following identity
is conjectured by Regev and proved by Zeilberger.

\begin{thm}{\rm (Regev \cite{Regev_09}, Zeilberger \cite{EKHAD_06})} \label{con:Regev}
$$|\mathcal{T}_3((2,1,0);n-3)|=m_{n-1}-m_{n-3}$$
\end{thm}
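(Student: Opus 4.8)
The plan is to recode the skew tableaux as lattice walks, reinterpret the claim as a subtraction-free identity, and then prove that identity with the bijection $\phi$ of Section~2 in hand.

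First I would encode each $T\in\mathcal{T}_3((2,1,0);n-3)$ by a word $w=z_1\cdots z_{n-3}$ on $\{1,2,3\}$, with $z_j$ the row of the cell holding $j$, just as for $\chi$. Letting $a_i(t)$ count the $i$'s among $z_1,\dots,z_t$, column-strictness of a skew filling of $\mathcal{T}_3/(2,1,0)$ translates into the shifted ballot condition $a_2(t)\le a_1(t)+1$ and $a_3(t)\le a_2(t)+1$ for every $t$; the two missing inner cells make each inequality automatic exactly when no cell lies above the freshly placed one, so imposing it for all prefixes is equivalent to the skew condition. Recording $p(t)=a_1(t)-a_2(t)$ and $q(t)=a_2(t)-a_3(t)$ turns $w$ into a lattice walk in $\ZZ_{\ge0}^2$ with steps $(1,0),(-1,1),(0,-1)$ for the letters $1,2,3$, and the condition says exactly that, started at $(1,1)$, the walk stays in the quarter-plane. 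Identifying $(p,q)$ with the partition $(p+q,q,0)$, these are the saturated box-adding chains in the lattice $Y_3$ of partitions with at most three rows; I will write $C_N(\lambda)$ for the number of such chains of length $N$ starting at $\lambda$. Thus $|\mathcal{T}_3((2,1,0);n-3)|=C_{n-3}((2,1))$, while ordinary Yamanouchi words are the chains from $\emptyset$, so $C_N(\emptyset)=m_N$ by the bijection of Section~2.

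Next I would remove the subtraction. The claim is equivalent to $C_{n-3}((2,1))+m_{n-3}=m_{n-1}$. Here $m_{n-1}=C_{n-1}(\emptyset)$ counts chains of length $n-1$ from $\emptyset$; since such a chain must begin $\emptyset\to(1)\to\{(2),(1,1)\}$, we get $C_{n-1}(\emptyset)=C_{n-3}((2))+C_{n-3}((1,1))$. Hence the theorem reduces to the symmetric identity
\[
C_N((2))+C_N((1,1))=C_N((2,1))+C_N(\emptyset),\qquad N=n-3,
\]
and it suffices to construct a bijection realizing it.

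Finally, in the walk coordinates the four start-points are $(2,0),(0,1)$ on the left and $(1,1),(0,0)$ on the right. Translating a walk by $(-1,1)$ bijects those from $(2,0)$ that keep $p\ge1$ with those from $(1,1)$ that keep $q\ge1$, and translating by $(0,-1)$ bijects those from $(0,1)$ that keep $q\ge1$ with all walks from $(0,0)$. What is left is to match the two boundary families, namely the walks from $(2,0)$ that reach $p=0$ together with the walks from $(0,1)$ that reach $q=0$, against the walks from $(1,1)$ that reach $q=0$. I expect this to be the main obstacle: the two translations break down as soon as a walk meets a wall, and small cases show the correspondence genuinely permutes first-hitting points rather than fixing them. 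My approach will be to cut each walk at the first time it meets the relevant wall and switch the incoming portions between start-points, verifying that the switched walk remains in the quarter-plane and that the map is invertible; the interaction of the two walls $p=0$ and $q=0$ is the delicate point. Should the direct switch prove unwieldy, I would fall back on a reflection (Weyl-chamber) argument for these $A_2$-type walks, which produces exactly the two Motzkin terms $m_{n-1}$ and $m_{n-3}$, and then translate the surviving cancellation back into the desired bijection.
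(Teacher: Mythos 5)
Your reduction is correct and genuinely different from the paper's route: encoding the skew tableaux as quarter-plane walks from $(1,1)$ with steps $(1,0)$, $(-1,1)$, $(0,-1)$, peeling the first two boxes off a length-$(n-1)$ chain from $\emptyset$ to get $m_{n-1}=C_{n-3}((2))+C_{n-3}((1,1))$, and cancelling the two translatable families are all sound, and they do reduce the theorem to the identity
\[
\#\{\text{walks from }(2,0)\text{ hitting }p=0\}
+\#\{\text{walks from }(0,1)\text{ hitting }q=0\}
=\#\{\text{walks from }(1,1)\text{ hitting }q=0\},
\]
which I have checked in small cases. But that identity is exactly where the proposal stops being a proof, and it is not a routine step. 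The ``cut at the first wall-hitting time and switch'' plan does not go through as described: a walk first reaches $q=0$ only via a $(0,-1)$ step and lands on the $p$-axis at some $(r,0)$, after which the continuation is an arbitrary quarter-plane walk from the $p$-axis, whereas a walk from $(2,0)$ first reaches $p=0$ only via a $(-1,1)$ step and lands on the $q$-axis at some $(0,s+1)$ with $s+1\ge 1$. The post-hitting portions therefore live on different walls and cannot simply be exchanged; trying to iterate the decomposition produces a regress you have not controlled, and the fallback Gessel--Zeilberger reflection argument is only gestured at (for free right endpoints and this $A_2$ chamber it yields a signed six-term sum whose collapse to two Motzkin terms, and whose conversion ``back into the desired bijection,'' is precisely the missing work). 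You have correctly identified this as the main obstacle; as written it remains an obstacle, so the central combinatorial content of the theorem is still unproved.

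For comparison, the paper sidesteps interacting walls entirely. It fills the inner shape $(2,1,0)$ with $1,2,3$ to produce an ordinary SYT whose Yamanouchi word begins $1,2,1$, pushes this through the bijection $\phi$ so that the associated Motzkin path begins with one of $\U\D\L$, $\U\D\U$, $\U\L\L$, $\U\L\U$, and then counts the tails from the points $(3,0)$, $(3,1)$, $(3,2)$ by the one-dimensional generating function $x^jM^{j+1}$, reduced to Motzkin numbers via $x^2M^2=M-1-xM$ (Proposition~\ref{pro:X(i,j;n)}); the sum $m_{n-3}+2(m_{n-2}-m_{n-3})+(m_{n-1}-2m_{n-2})$ then gives the result. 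If you want to keep your quarter-plane framework, you will either have to construct the boundary bijection in full detail or replace it by an explicit count of the three boundary families (for instance by a first-passage decomposition combined with a generating-function computation in the spirit of the paper's Proposition~\ref{pro:X(i,j;n)}).
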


\medskip
We present a combinatorial proof by using the bijection $\phi$.
First we need some enumerative results. For $0\le j\le i$, let
$X(i,j;n)$ be the set of lattice paths that go from the point
$(i,j)$ to the point $(n,0)$ using steps $\U, \D,\L$ and never go
below the $x$-axis. For those paths whose starting points are on the
$x$-axis, we clearly have
\begin{equation}
|X(i,0;n)|=m_{n-i} \quad (0\le i\le n).
\end{equation}
For those paths with other starting points, we have the following
results.

\begin{pro} \label{pro:X(i,j;n)} For $0\le j\le i\le n$, the cardinality of $X(i,j;n)$ can be expressed as a linear combination of Motzkin numbers. In particular, we have
\begin{enumerate}
\item $|X(i,1;n)|=m_{n-i+1}-m_{n-i}$.
\item $|X(i,2;n)|=m_{n-i+2}-2m_{n-i+1}$.
\end{enumerate}
\end{pro}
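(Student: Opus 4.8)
The plan is to avoid generating functions entirely and instead set up a recursion in the height $j$, conditioning on the first step of the path, with the established identity $|X(i,0;n)|=m_{n-i}$ as the base case. The first observation is that horizontal translation is a bijection between path sets, so $|X(i,j;n)|$ depends only on $j$ and the difference $n-i$. I will write $g_j(N)$ for the common value when $n-i=N$, so that $g_0(N)=m_N$, and the goal becomes to show each $g_j(N)$ is an integer linear combination of Motzkin numbers, with $g_1(N)=m_{N+1}-m_N$ and $g_2(N)=m_{N+2}-2m_{N+1}$.

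The engine of the proof is a \emph{first-step decomposition}. Consider a path in $X(i-1,j-1;n)$, which starts at height $j-1$. Its first step is $\U$, $\L$, or $\D$ (the last being possible only when $j-1\ge 1$), landing at $(i,j)$, $(i,j-1)$, or $(i,j-2)$ respectively; deleting that first step gives a bijection onto $X(i,j;n)$, $X(i,j-1;n)$, or $X(i,j-2;n)$. Hence for $j\ge 2$,
\[
|X(i-1,j-1;n)| = |X(i,j;n)| + |X(i,j-1;n)| + |X(i,j-2;n)|,
\]
whereas for $j=1$ the down step is forbidden and we instead get $|X(i-1,0;n)|=|X(i,1;n)|+|X(i,0;n)|$. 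In the $g_j$ notation these read $g_j(N)=g_{j-1}(N+1)-g_{j-1}(N)-g_{j-2}(N)$ and $g_1(N)=g_0(N+1)-g_0(N)$.

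From here the two explicit formulas drop out. The $j=1$ relation gives $g_1(N)=m_{N+1}-m_N$, which is part (i). Feeding this together with $g_0(N)=m_N$ and $g_1(N+1)=m_{N+2}-m_{N+1}$ into the $j=2$ relation yields $g_2(N)=(m_{N+2}-m_{N+1})-(m_{N+1}-m_N)-m_N=m_{N+2}-2m_{N+1}$, which is part (ii). The general assertion then follows by strong induction on $j$: the recursion expresses $g_j(N)$ as a fixed integer combination of $g_{j-1}(N+1)$, $g_{j-1}(N)$, and $g_{j-2}(N)$, each of which by the inductive hypothesis is a linear combination of Motzkin numbers with constant coefficients, and a shift $m_{N+c}\mapsto m_{N+1+c}$ preserves this form, so $g_j(N)$ does as well.

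The algebra here is entirely routine; the only genuine decision is to decompose the \emph{lowered} set $X(i-1,j-1;n)$ rather than $X(i,j;n)$ directly. This is exactly what forces every term on the right to sit at a strictly smaller height, so that the induction closes and bottoms out at $m_{n-i}$, instead of producing paths of larger height and failing to terminate. I expect the only point needing care to be the bookkeeping that the coefficients truly do not depend on $n$ or $i$; passing to the variable $N=n-i$ via translation invariance makes this transparent, since the recursion for $g_j(N)$ involves only constant coefficients and integer shifts of $N$.
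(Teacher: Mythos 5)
Your proof is correct, but it takes a genuinely different route from the paper's. The paper factors each path in $X(i,j;n)$ as $\pi=\beta_1D_1\beta_2D_2\cdots\beta_jD_j\beta_{j+1}$, where $D_k$ is the first down step from level $y=j-k+1$ to level $y=j-k$ and each $\beta_k$ is a translated Motzkin path; this yields the generating function identity $|X(i,j;n)|=[x^{n-i}]\{x^jM^{j+1}\}$ in one stroke, and the reduction to a linear combination of Motzkin numbers is then algebra via $x^2M^2=M-1-xM$. You instead avoid generating functions entirely and run a first-step recursion in the height: deleting the first step of a path in $X(i-1,j-1;n)$ lands bijectively in one of $X(i,j;n)$, $X(i,j-1;n)$, $X(i,j-2;n)$, giving $g_j(N)=g_{j-1}(N+1)-g_{j-1}(N)-g_{j-2}(N)$, which closes by induction at $g_0(N)=m_N$. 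Your remark that one must decompose the \emph{lowered} set rather than $X(i,j;n)$ itself is exactly the right point; decomposing $X(i,j;n)$ directly would produce the higher set $X(i+1,j+1;n)$ and the recursion would never bottom out. Your approach buys elementarity and makes the ``constant coefficients'' claim transparent in the variable $N=n-i$; the paper's buys a closed form $x^jM^{j+1}$ for all $j$ simultaneously. The two are ultimately two faces of the same functional equation $M=1+xM+x^2M^2$. One small point worth a sentence in your write-up: the first-step deletion requires the path in $X(i-1,j-1;n)$ to be nonempty, which holds because it has $n-(i-1)=N+1\ge 1$ steps whenever $N=n-i\ge 0$; the recursion would fail at $N=-1$, so the restriction $i\le n$ in the statement is genuinely used.
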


\begin{proof} Given a $\pi\in X(i,j;n)$, the path $\pi$ can be factorized as
$\pi=\beta_1D_1\beta_2D_2\cdots \beta_jD_j\beta_{j+1}$, where $D_k$
is the first down step that goes from the line $y=j-k+1$ to the line
$y=j-k$ ($1\le k\le j$), and $\beta_k$ is a Motzkin path of certain
length (possibly empty). Hence the generating function for the
number $|X(i,j;n)|$ is $x^jM^{j+1}$, i.e.,
$|X(i,j;n)|=[x^{n-i}]\{x^jM^{j+1} \}$. With the equation
$x^2M^2=M-1-xM$, an equivalent form of (\ref{eq1}), the generating
function $x^jM^{j+1}$ can be reduced to a linear combination of
$\{x^{d}M\}_{d\in\ZZ}$. Hence $|X(i,j;n)|$ can be expressed as
linear combinations of Motzkin numbers. In particular,
\begin{equation*}
|X(i,1;n)|=[x^{n-i}]\{xM^2\}=[x^{n-i+1}]\{x^2M^2\}=[x^{n-i+1}]\{M-1-xM\}=m_{n-i+1}-m_{n-i}.
\end{equation*}
Moreover,
\begin{eqnarray*}
|X(i,2;n)| &=& [x^{n-i}]\{x^2M^3\} \\
           &=& [x^{n-i}]\{M(M-1-xM)\} \\
           &=& [x^{n-i+2}]\{M-1-xM\}-[x^{n-i}]\{M\}-[x^{n-i+1}]\{M-1-xM\} \\
           &=& m_{n-i+2}-2m_{n-i+1},
\end{eqnarray*}
as required.
\end{proof}

\noindent {\em Proof of Theorem \ref{con:Regev}.} Given a $T\in
\mathcal{T}_3((2,1,0),n-3)$, we form a new SYT $\overline{T}$ by
letting $\overline{T}(1,1)=1, \overline{T}(2,1)=2,
\overline{T}(1,2)=3$, and $\overline{T}(i,j)=T(i,j)+3$ for other
entries, where $\overline{T}(i,j)$ is the $(i,j)$ entry of
$\overline{T}$. It is clear that $\overline{T}$'s and $T$'s are
equinumerous. We are about to count the number of $\overline{T}$'s.
The associated Yamanouchi word $\omega=\chi(\overline{T})$ starts
with $1,2,1$, and according to the bijection $\phi$, the initial
three steps $x_1x_2x_3$ of the corresponding Motzkin path
$\pi=\phi^{-1}(\omega)$ of $\overline{T}$ must be one of $\{\U\D\L,
\U\D\U, \U\L\L, \U\L\U\}$ (shown below).
\begin{figure}[h]
\begin{center}
\includegraphics[width=3.2in]{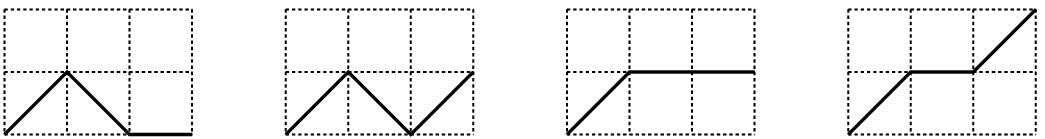}
\end{center}
\label{fig:initial-steps}
\end{figure}

Let $\pi=x_1x_2x_3\beta$, where $\beta$ is the remaining part of
$\pi$. Then $\pi$ can be classified in one of the following cases.
\begin{itemize}
   \item $x_1x_2x_3=\U\D\L$. Then $\beta$ is a Motzkin path from the point $(3,0)$ to the point $(n,0)$. The possibilities of $\pi$ are $m_{n-3}$.
   \item $x_1x_2x_3=\U\D\U$ or $\U\L\L$. Then $\beta$ starts from the point $(3,1)$. By Proposition \ref{pro:X(i,j;n)}(i), the possibilities of $\pi$ are $2(m_{n-2}-m_{n-3})$.
   \item $x_1x_2x_3=\U\L\U$. Then $\beta$ starts from the point $(3,2)$. By Proposition \ref{pro:X(i,j;n)}(ii), the possibilities of $\pi$ are $m_{n-1}-2m_{n-2}$.
\end{itemize}
The assertion follows from summing up the above three quantities.
\qed

\subsection{A uniform way to prove Zeilberger's identities}
Note that the method we used to prove Regev's conjecture can be
applied to tableaux of various skew shapes. Let $\mu$ be a partition
with at most three parts. The purpose is to compute
$|\mathcal{T}_3(\mu; n-|\mu|)|$ for a fixed SYT $T'$ of shape $\mu$.
For every $T\in \mathcal{T}_3(\mu; n-|\mu|)$, let $\overline{T}$ be
the SYT defined by
$$\overline{T}(i,j)=\begin{cases}
T'(i,j), &\text{if the $(i,j)$ entry is in $T'$;}\\
T(i,j)+|\mu|, &\text{if the $(i,j)$ entry is in $T$.}
\end{cases}$$ Converting $\overline{T}$ into a Yamanouchi word $\omega
=\chi(\overline{T})$, it suffices to count the Motzkin paths
$\pi=\phi^{-1}(\omega)$. Such paths $\pi$ have a factorization
$\pi=\alpha \beta$, where $\alpha$ consists of the initial $|\mu|$
steps and $\beta$ is the remaining part of $\pi$. Now one can list
all possible segments $\alpha$ whose words coincide with the initial
subword of length $|\mu|$ of $\chi(\overline{T})$, and can classify
these segments according to their end points, say $(i,j)$, in the
plane. Then for each class the possibilities of $\beta$ can be
determined by the formulae $|X(i,j;n)|$ in Lemma~\ref{pro:X(i,j;n)}.
Since each term $|X(i,j;n)|$ can be expressed as a linear
combination of Motzkin numbers, we answer Zeilberger's question.

\begin{thm}
For every $\mu$ of at most three parts, the cardinality of
$\mathcal{T}_3(\mu; n-|\mu|)$ can be expressed as a linear
combination of negative shifts of the Motzkin numbers $m_n$ with
constant coefficients, and each formula can be proved
combinatorially.
\end{thm}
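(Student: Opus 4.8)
The plan is to rerun the argument behind Theorem \ref{con:Regev} for an arbitrary $\mu$ with at most three parts, isolating the one structural property of $\phi$ that makes the bookkeeping uniform across all $\mu$. Fix a standard Young tableau $T'$ of shape $\mu$ and set $w=\chi(T')$, a fixed word of length $|\mu|$. Exactly as in the proof of Theorem \ref{con:Regev}, passing from $T\in\mathcal{T}_3(\mu;n-|\mu|)$ to the tableau $\overline{T}$ (place $T'$ on the first $|\mu|$ cells and add $|\mu|$ to every entry of $T$) is a bijection onto the SYT in $\mathcal{T}_3(n)$ whose Yamanouchi word begins with $w$. Applying $\phi^{-1}$, it then suffices to count the Motzkin paths $\pi=\alpha\beta\in\M_n$, with $\alpha=x_1\cdots x_{|\mu|}$, for which the length-$|\mu|$ prefix of $\phi(\pi)$ equals $w$. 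The crux is to show this prefix is controlled entirely by $\alpha$.

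The key lemma I would establish first is a locality statement: the labels that procedure (A1)--(A2) assigns to $x_1,\dots,x_{|\mu|}$ depend only on $\alpha$, not on the tail $\beta$. The reason is that each label is forced by the step's type and its position inside $\alpha$. Every up step is labeled $1$; a level step is labeled $1$ or $2$ according as it sits at height $0$ or at positive height, which $\alpha$ determines; and a down step is labeled $2$ or $3$ according as it closes a bare up--down pair or an up--level--down triple. Since the level step of a triple precedes its down step, any down step of $\alpha$ that closes a triple has its matching level step in $\alpha$ as well, so this alternative too is decided inside $\alpha$. The only way $\beta$ enters is that an up step or a positive-height level step of $\alpha$ left unmatched within $\alpha$ acquires its partner in $\beta$; but this changes neither the label already forced on that $\alpha$-step nor the height profile of $\alpha$.

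Granting the lemma, the count factors cleanly and finitely. I would enumerate the admissible prefixes: the lattice segments $\alpha$ of length $|\mu|$ that stay weakly above the axis and whose induced prefix word is $w$ (at most $3^{|\mu|}$ segments to test). Each admissible $\alpha$ terminates at a point $(|\mu|,j)$ with $0\le j\le|\mu|$, and by the lemma every Motzkin path with prefix $w$ is uniquely $\alpha\beta$ with $\beta$ an arbitrary path of $X(|\mu|,j;n)$. Writing $a_j$ for the number of admissible $\alpha$ ending at height $j$ (a constant depending only on $\mu$ and $T'$), we obtain
$$|\mathcal{T}_3(\mu;n-|\mu|)|=\sum_{j=0}^{|\mu|}a_j\,|X(|\mu|,j;n)|.$$
By Proposition \ref{pro:X(i,j;n)}, each $|X(|\mu|,j;n)|$ is an integer combination, with coefficients independent of $n$, of negative shifts $m_{n-|\mu|+d}$ of the Motzkin numbers---precisely the reduction of $x^{j}M^{j+1}$ by $x^2M^2=M-1-xM$. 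Hence the displayed sum is again such a combination, which is the assertion; and every ingredient---the bijection $\phi$, the enumeration of the $a_j$, and Proposition \ref{pro:X(i,j;n)}---is combinatorial.

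I expect the locality lemma to be the main obstacle. It is used tacitly in the proof of Theorem \ref{con:Regev} and is intuitively clear, but a rigorous proof must contend with the multi-round nature of (A1)--(A2): the round in which a given $\alpha$-step is labeled, and the step it is matched with, can both shift as $\beta$ varies, and one must verify that none of this ever perturbs the label itself. Once the lemma is in place, everything downstream is a finite, mechanical enumeration combined with the already-proved Proposition \ref{pro:X(i,j;n)}.
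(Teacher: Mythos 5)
Your proposal follows the paper's own argument essentially verbatim: pass from $T$ to $\overline{T}$, factor the corresponding Motzkin path as $\pi=\alpha\beta$ with $\alpha$ the first $|\mu|$ steps, enumerate the finitely many admissible prefixes $\alpha$ by the height $j$ of their endpoint, and conclude via $\sum_j a_j|X(|\mu|,j;n)|$ together with Proposition \ref{pro:X(i,j;n)}. The one thing you add is an explicit statement of the locality lemma (the first $|\mu|$ letters of $\phi(\pi)$ depend only on $\alpha$), which the paper indeed uses tacitly both here and in the proof of Theorem \ref{con:Regev}; isolating it is a genuine improvement in rigor, and the lemma is true.

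However, the specific mechanism you offer for it is wrong: it is not the case that a level step is labeled $1$ or $2$ according as it lies at height $0$ or at positive height. For $\pi=\U\L\L\D$ the first round labels $x_1,x_2,x_4$ with $1,2,3$, and the second round labels $x_3$ (a level step at height $1$) with $1$, so $\phi(\pi)=1213$; your rule would assign $x_3$ the label $2$. The correct invariant is positional rather than height-based: a level step gets $1$ exactly when it becomes the first step of the working path in the round that labels it, and one shows by induction on rounds that, for each position $i$, the set of still-unlabeled positions to the left of $i$ at the moment $i$ is labeled --- and hence the label of $x_i$ itself --- is determined by $x_1,\dots,x_i$ alone. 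With the lemma proved along those lines, the rest of your argument is exactly the paper's and is correct.
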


Zeilberger also considered the problem of counting those SYTs in
$\mathcal{T}_3$ with the restriction that the $(i,j)$ entry is a
fixed number $m$. He pointed out that the formula can be expressed
as a linear combination of $|\mathcal{T}_3(\mu;n-|\mu|)|$ (hence
also a linear combination of Motzkin numbers), and produced a list
of formulae for $1\le m\le 15$ and all feasible
$(i,j)$~\cite{EKHAD_06}. Therefore, by using the same method we can
also prove all these formulae combinatorially.

\section{A conjecture}
Although obtaining simple formulae for $|\mathcal{T}_k(n)|$, $k\ge
6$, seems hopeless, in this section we give a conjecture which
reveals an (unexpected) relation between $|\mathcal{T}_{2\ell}(n)|$
and $|\mathcal{T}_{2\ell+1}(n)|$. The proof of the following simple
fact is omitted.
\begin{lem} The number of
Motzkin paths of length $n$ with the restriction that the level
steps $(1,0)$ are always on the $x$-axis is the central binomial
number ${n\choose \lfloor \frac{n}{2} \rfloor}$.
\end{lem}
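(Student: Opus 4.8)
The plan is to set up the length generating function of these restricted paths and to identify it with the known generating function of the central binomial coefficients. The key structural observation is that in a Motzkin path all of whose level steps lie on the $x$-axis, only up and down steps are available at any positive height; hence every maximal portion of the path that stays strictly above the $x$-axis is an ordinary Dyck path built from $\U$ and $\D$ alone. Let $C=C(x)$ be the length generating function of Dyck paths, so that $C=1+x^2C^2$. Cutting such a path at its successive returns to the $x$-axis writes it uniquely as a sequence of blocks, each block being either a single ground-level $\L$ step (weight $x$) or an elevated arch $\U\,(\text{Dyck})\,\D$ (weight $x^2C$).

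This expresses the generating function $f=f(x)$ of the restricted paths as a geometric series in the block weight:
\begin{equation*}
f=\frac{1}{1-x-x^2C}.
\end{equation*}
The next step is to eliminate $C$. From $C=1+x^2C^2$ one has $x^2C=\tfrac12\bigl(1-\sqrt{1-4x^2}\bigr)$, so that $1-x-x^2C=\tfrac12\bigl(1-2x+\sqrt{1-4x^2}\bigr)$ and
\begin{equation*}
f=\frac{2}{1-2x+\sqrt{1-4x^2}}=\frac{1}{2x}\left(\sqrt{\frac{1+2x}{1-2x}}-1\right),
\end{equation*}
the last equality after rationalizing and using $1-4x^2=(1-2x)(1+2x)$.

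It then remains to verify that $\sum_{n\ge0}\binom{n}{\lfloor n/2\rfloor}x^n$ equals the same function. I would split this series into its even and odd parts: the even part is $\sum_{m}\binom{2m}{m}x^{2m}=(1-4x^2)^{-1/2}$, while the odd part is handled by the identity $\binom{2m+1}{m}=\tfrac12\binom{2m+2}{m+1}$, which turns it into $\tfrac{1}{2x}\bigl((1-4x^2)^{-1/2}-1\bigr)$. Summing the two parts reproduces $\tfrac{1}{2x}\bigl(\sqrt{(1+2x)/(1-2x)}-1\bigr)$ exactly, which finishes the argument. No single step here is a real obstacle; the only place demanding care is the bookkeeping in the odd half of the central binomial generating function.

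A more combinatorial alternative, closer to the theme of the paper, would be to argue through the bijection $\phi$ directly. In the labelling rules (A1)--(A2) a ground-level level step always receives the label $1$, whereas a level step acquires the label $2$ only when it is immediately preceded by an up step, which forces it to sit strictly above the $x$-axis (the label $3$ is only ever assigned to a down step). One therefore expects $\phi$ to carry the restricted Motzkin paths bijectively onto the Yamanouchi words on $\{1,2\}$, so that their number is $|\mathcal{T}_2(n)|=\binom{n}{\lfloor n/2\rfloor}$ by Regev's theorem quoted in the introduction. The genuine obstacle in this route is that (A1)--(A2) repeatedly removes labelled steps and reconcatenates the survivors, which can alter step adjacencies; one must check that no ground-level level step ever becomes immediately preceded by an up step at any stage of the recursion. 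Verifying this invariant throughout the process is the delicate point, and it is precisely what the generating-function computation avoids.
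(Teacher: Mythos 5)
Your generating-function argument is correct and complete: the decomposition into ground-level $\L$ blocks and elevated Dyck arches gives $f=(1-x-x^2C)^{-1}$, the algebra reducing this to $\frac{1}{2x}\bigl(\sqrt{(1+2x)/(1-2x)}-1\bigr)$ is right, and your even/odd split of $\sum_n\binom{n}{\lfloor n/2\rfloor}x^n$ (using $\binom{2m+1}{m}=\tfrac12\binom{2m+2}{m+1}$) matches it exactly. There is nothing in the paper to compare against: the author states ``the proof of the following simple fact is omitted,'' so any correct proof is a net addition. One remark on your combinatorial alternative: the ``delicate point'' you flag is in fact easy to dispatch. If every level step of $\pi$ lies on the $x$-axis and $\pi$ begins with an up step, then the first non-up step $x_j$ sits at positive height and so cannot be a level step; hence rule (A1) only ever assigns labels $1$ and $2$, and the pair removed in (A2) is an adjacent $\U\D$, whose deletion leaves the height of every surviving step unchanged. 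The invariant therefore propagates through the recursion, $\phi$ restricts to a bijection onto the Yamanouchi words on $\{1,2\}$, and the count $|\mathcal{T}_2(n)|=\binom{n}{\lfloor n/2\rfloor}$ follows from Regev's formula quoted in the introduction. That route is arguably the one most in the spirit of the paper, since it reuses the bijection $\phi$ rather than introducing an independent generating-function calculation, but either argument suffices.
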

Let $\mathbb{R}_{\ge 0}$ denote the set of nonnegative real numbers.
Hence we have the following fact:
\begin{cor}
$|\mathcal{T}_3(n)|$ equals the number of lattice paths in
$\mathbb{R}_{\ge 0}^2$ from the origin to the $x$-axis using steps
$(1,0), (1,1), (1,-1)$, and $|\mathcal{T}_2(n)|$ equals the number
of these lattice paths with the restriction that the $(1,0)$ steps
appear only on the $x$-axis.
\end{cor}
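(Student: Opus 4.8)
The plan is to read off both statements from the bijection $\phi$ together with the preceding lemma, after first observing that the lattice paths described in the corollary are nothing but Motzkin paths. Indeed, a path in $\mathbb{R}_{\ge 0}^2$ of length $n$ that starts at the origin, uses the steps $(1,0),(1,1),(1,-1)$, and never leaves the upper half plane must, after $n$ steps, sit at $x$-coordinate $n$; requiring it to end on the $x$-axis pins the endpoint to $(n,0)$. Thus these are exactly the elements of $\M_n$, and the first assertion $|\mathcal{T}_3(n)|=|\M_n|=m_n$ is just Regev's theorem, i.e.\ the bijection $\phi$ of Section 2.

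For the second assertion I would show that $\phi$ restricts to a bijection between $\mathcal{T}_2(n)$ and the set of Motzkin paths whose level steps all lie on the $x$-axis. Since a tableau lies in $\mathcal{T}_2(n)$ exactly when its Yamanouchi word omits the letter $3$, the whole statement reduces to the following key lemma: for $\pi\in\M_n$, the word $\phi(\pi)$ contains the letter $3$ if and only if $\pi$ has a level step strictly above the $x$-axis. Granting this, $\phi$ carries the restricted Motzkin paths bijectively onto the letter-$3$-free words, whence $|\mathcal{T}_2(n)|$ equals the number of restricted paths, which the preceding lemma evaluates as ${n\choose\lfloor n/2\rfloor}$.

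The key lemma I would prove by induction along the labeling algorithm (A1)--(A2). The only place a $3$ is ever created is the triple sub-case of (A1), and that sub-case fires precisely when the first non-up step of the current path is a level step sitting above the axis; this settles one implication at the root and reduces the other to the recursion. The point that makes the induction go through is that the two length-reducing operations --- deleting a leading axis-level step, and deleting a peak $\U\D$ --- leave the heights of all surviving steps unchanged. Consequently the forward direction is immediate: if every level step of $\pi$ is on the axis, then whenever the path starts with an up step its first non-up step has height at least $1$ and so must be a down step, the triple sub-case never fires, no $3$ appears, and each deletion preserves the restricted property, so the induction closes. For the converse I would track the leftmost level step lying above the axis: in a non-triple iteration it is never one of the deleted steps (the deleted peak lies entirely to its left, and a leading level step is on the axis), and its height is unchanged, so some above-axis level step survives into the strictly shorter path $\phi$ next processes; since the length drops at each stage, the triple sub-case --- and hence a $3$ --- must eventually occur.

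The main obstacle is exactly this converse direction, the worry being that an above-axis level step might be silently lowered onto the axis by earlier peak deletions and then emerge with label $1$, producing no $3$ at all. The leftmost-survivor argument above is what rules this out; alternatively, one may finish non-constructively, since the forward direction already exhibits $\phi$ as an injection from the restricted paths into the letter-$3$-free words, while the preceding lemma together with Regev's formula $|\mathcal{T}_2(n)|={n\choose\lfloor n/2\rfloor}$ shows the two finite sets to be equinumerous, forcing a bijection. Either route delivers both equalities of the corollary.
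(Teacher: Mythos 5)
Your argument is correct, but it proves the second equality by a genuinely different route than the paper. The paper's (largely implicit) proof is purely enumerative: it invokes Regev's formula $|\mathcal{T}_2(n)|=\binom{n}{\lfloor n/2\rfloor}$ together with the preceding lemma, whose proof is omitted, that the restricted Motzkin paths are also counted by $\binom{n}{\lfloor n/2\rfloor}$, so the two sets are equinumerous ``by coincidence of formulas.'' You instead restrict the bijection $\phi$ itself, via the new key lemma that $\phi(\pi)$ avoids the letter $3$ exactly when every level step of $\pi$ lies on the $x$-axis; your case analysis of (A1)--(A2) is sound (the triple sub-case fires precisely when the first non-up step of the current path is an above-axis level step, the two non-triple deletions preserve the heights of all surviving steps, and your leftmost-survivor tracking correctly handles the converse, since the deleted peak and the deleted leading level step can never be the tracked step). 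This buys a self-contained bijective proof that needs neither the omitted lemma nor Regev's $\mathcal{T}_2$ formula, and it is arguably more in the spirit of the paper's stated program of uniform bijective proofs; what it costs is the extra bookkeeping of the induction along the labeling algorithm. Your non-constructive fallback at the end is essentially the paper's own argument.
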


This leads to our conjecture. Let $\{\mathbf{e}_1,\dots
\mathbf{e}_{\ell+1}\}$ denote the standard basis of
$\mathbb{R}^{\ell+1}$ and let $\mathcal{L}_{2\ell+1}(n)$ be the set
of $n$-step lattice paths in $\mathbb{R}_{\ge 0}^{\ell+1}$ from the
origin to the axis along $\mathbf{e}_1$, using $2\ell+1$ kinds of
steps $\mathbf{e}_1, \mathbf{e}_1\pm \mathbf{e}_2, \mathbf{e}_1\pm
(\mathbf{e}_2-\mathbf{e}_3), \mathbf{e}_1\pm
(\mathbf{e}_3-\mathbf{e}_4), \dots , \mathbf{e}_1\pm
(\mathbf{e}_{\ell}-\mathbf{e}_{\ell+1})$. By combining works of
Grabiner and Magyar~\cite{Grabiner_93} and Gessel~\cite{Gessel_90},
Zeilberger proved~\cite{Zeilberger_07}, equivalently, that
$$|\mathcal{T}_{2\ell+1}(n)|=|\mathcal{L}_{2\ell+1}(n)|.$$

\begin{con} Let $\mathcal{L}_{2\ell}(n)$ be the set of lattice
paths in $\mathcal{L}_{2\ell+1}(n)$ with the restriction that the
$\mathbf{e}_1$ steps appear only on the hyperplane spanned by
$\{\mathbf{e}_1, \dots , \mathbf{e}_\ell\}$. Then we have
$$\qquad|\mathcal{T}_{2\ell}(n)|=|\mathcal{L}_{2\ell}(n)|.$$
\end{con}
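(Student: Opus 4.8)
The plan is to recast both sides as counts of lattice walks and then match their generating functions. First I would strip off the coordinate along $\mathbf{e}_1$: since every allowed step increases the $\mathbf{e}_1$-coordinate by exactly one, that coordinate merely records the number of steps, and a path in $\mathcal{L}_{2\ell+1}(n)$ is determined by the excursion of the remaining $\ell$ coordinates $v=(y_2,\dots,y_{\ell+1})$ in $\mathbb{R}_{\ge 0}^\ell$, from the origin back to the origin, using the step set $\{0,\pm\mathbf{e}_1,\pm(\mathbf{e}_1-\mathbf{e}_2),\dots,\pm(\mathbf{e}_{\ell-1}-\mathbf{e}_\ell)\}$. Applying the linear change of variables $x_i=v_i+v_{i+1}+\cdots+v_\ell$ (whose inverse sends consecutive differences $x_i-x_{i+1}$ back to the $v_i$) carries the positive orthant onto the dominant Weyl chamber $x_1\ge x_2\ge\cdots\ge x_\ell\ge 0$ of type $B_\ell$ and turns the step set into $\{0,\pm\mathbf{e}_1,\dots,\pm\mathbf{e}_\ell\}$. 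Under this dictionary the hyperplane spanned by $\{\mathbf{e}_1,\dots,\mathbf{e}_\ell\}$ is exactly $\{x_\ell=0\}$, so the defining restriction of $\mathcal{L}_{2\ell}(n)$ becomes: \emph{the stay-put step $0$ may be used only on the wall $x_\ell=0$.} Thus the conjecture is equivalent to the statement that closed $n$-step walks in the $B_\ell$-chamber with this wall-restricted stay-put step are counted by $|\mathcal{T}_{2\ell}(n)|$, and it is exactly the $\ell=1$ instance that is recorded in the Lemma and Corollary above.

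With this reformulation in hand I would attack the general case by the same analytic route that Zeilberger used for the unrestricted identity $|\mathcal{T}_{2\ell+1}(n)|=|\mathcal{L}_{2\ell+1}(n)|$, namely the reflection principle of Grabiner and Magyar~\cite{Grabiner_93} combined with Gessel's Bessel-function generating functions~\cite{Gessel_90}. The structural point I want to exploit is that passing from ``stay-put allowed everywhere'' to ``stay-put allowed only on $x_\ell=0$'' should be precisely the modification that converts the type-$B_\ell$ behaviour at the short wall into the type-$D_\ell$ one, and hence turns Gessel's odd-row Bessel determinant for $|\mathcal{T}_{2\ell+1}(n)|$ into his even-row determinant for $|\mathcal{T}_{2\ell}(n)|$ (the difference between the two being a single $+/-$ in the reflected terms). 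Concretely, I would perform a sojourn decomposition of the walk along the wall $x_\ell=0$: away from that wall the last coordinate performs an ordinary $\pm 1$ motion with no stay-put step, while every visit to $x_\ell=0$ may absorb a run of stay-put steps. Absorbing these runs into the transfer operator of the last coordinate replaces its exponential generating function by the ``central-binomial'' series that already governs the $\ell=1$ case, which is the analytic shadow of the $B\to D$ change of wall.

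The step I expect to be the genuine obstacle is making this last move rigorous, because the reflection principle in the Grabiner--Magyar form requires a single step set that is invariant under the whole Weyl group and applied uniformly at every lattice point, whereas the wall-restricted stay-put step is \emph{not} uniform: the available steps change on the boundary $x_\ell=0$. One therefore cannot simply quote the reflection formula; one must show that the non-uniform boundary rule still unfolds to an alternating sum over $W(B_\ell)$ of free walk counts, now carrying the modified last-coordinate weight, and then that this alternating sum collapses to Gessel's even-row determinant. A possible way to sidestep the analysis is an induction on $\ell$ that peels off the last coordinate, using the $\ell=1$ Lemma as the base; the difficulty there is the ordering constraint $x_{\ell-1}\ge x_\ell$, which couples the last coordinate to the rest and must be controlled through the induction. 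Finally, in the bijective spirit of this paper one would instead hope to extend the map $\phi$ to a bijection $\Phi\colon\mathcal{T}_{2\ell+1}(n)\to\mathcal{L}_{2\ell+1}(n)$ and show that it restricts to $\Phi\colon\mathcal{T}_{2\ell}(n)\to\mathcal{L}_{2\ell}(n)$, with tableaux avoiding row $2\ell+1$ corresponding to paths whose stay-put steps all lie on $x_\ell=0$; the obstruction here is that Zeilberger's identity is proved analytically and no such explicit $\Phi$ is known for $\ell>1$, so constructing it would itself be the main content of a combinatorial proof.
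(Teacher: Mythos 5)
This statement is left as an \emph{open conjecture} in the paper: the author only reports that it has been proved for $\ell=2,3$ and checked by computer for $\ell\le 10$ up to $n=30$, citing work in preparation, so there is no proof in the paper to compare your argument against. Your submission is, by its own account, a programme rather than a proof. The preliminary reduction is sound and worth keeping: since every allowed step increments the $\mathbf{e}_1$-coordinate by one, projecting that coordinate away and applying the change of variables $x_i=v_i+\cdots+v_\ell$ correctly converts $\mathcal{L}_{2\ell+1}(n)$ into closed $n$-step walks in the chamber $x_1\ge\cdots\ge x_\ell\ge 0$ with step set $\{0,\pm\mathbf{e}_1,\dots,\pm\mathbf{e}_\ell\}$, and converts the defining restriction of $\mathcal{L}_{2\ell}(n)$ into the condition that the zero step may occur only on the wall $x_\ell=0$. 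This matches the Grabiner--Magyar picture underlying Zeilberger's proof of the odd-row identity and correctly identifies the Lemma and Corollary of Section 4 as the case $\ell=1$.

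Everything after that reformulation is a list of hoped-for steps, each of which you yourself flag as unestablished, and none of which closes. Concretely: first, the reflection principle in the Grabiner--Magyar form needs a step set that is applied uniformly at every lattice point, and the wall-dependent stay-put rule destroys that uniformity, so the alternating sum over the Weyl group does not follow from their theorem and you supply no substitute argument. Second, the assertion that restricting the stay-put step to $x_\ell=0$ turns Gessel's odd-row determinant into his even-row determinant is precisely the conjecture restated in analytic language, not a derivation of it. Third, the proposed induction on $\ell$ founders, as you note, on the coupling $x_{\ell-1}\ge x_\ell$ between the last coordinate and the rest. Fourth, no bijection $\Phi$ extending $\phi$ is constructed, and you acknowledge that building one would itself be the main content of a combinatorial proof. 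So the proposal contributes a correct and useful equivalent formulation, but it does not prove the statement; the conjecture remains open as far as this argument goes.
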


This conjecture has been proved for $\ell=2,3$ and checked by
computer for $\ell \le 10$ up to $n=30$~\cite{Eu_10}.

\section*{Acknowledgements} The author thanks T.-S. Fu and D. Zeilberger for helpful discussions,
and the referee for the careful reading and valuable suggestions.

\vspace{1cm} \baselineskip=16pt

\end{document}